\newcommand{\Claim}[1]{\noindent\textbf{Claim. }\textit{#1}\\}
\newtheorem{theorem}{Theorem}[section]
\newtheorem{corollary}[theorem]{Corollary}
\newcommand{\Corollaryno}[1]{\noindent\textbf{Corollary #1.}}
\theoremstyle{definition}
\newtheorem{definition}[theorem]{Definition}
\newenvironment{proofc}{\noindent\textit{Proof of Claim.}}{\\}
\begin{document}

\title{A note on the nearly additivity of knot width}
\author{Jungsoo Kim}
\date{13, Jan, 2010}
\begin{abstract}
Let $k$ be a knot in $S^3$. In \cite{HS}, H.N. Howards and J. Schultens introduced a method to construct a manifold decomposition of double branched cover of $(S^3, k)$ from a thin position of $k$. In this article,
we will prove that if a thin position of $k$ induces a thin decomposition of double branched cover of $(S^3,k)$ by Howards and Schultens' method, then the thin position is the sum of prime summands by stacking  a thin position of one of prime summands of $k$ on top of a thin position of another prime summand, and so on. Therefore, $k$ holds the nearly additivity of knot width  (i.e. for $k=k_1\#k_2$, $w(k)=w(k_1)\# w(k_2)-2$) in this case. Moreover, we will generalize the hypothesis to the property a thin position induces a manifold decomposition whose thick surfaces consists of strongly irreducible or critical surfaces (so topologically minimal.)
\end{abstract}

\address{\parbox{4in}{Department of Mathematics\\
Konkuk University\\
Seoul 143-701\\ Korea\medskip}} \email{pibonazi@gmail.com}
\subjclass[2000]{Primary 57M27; Secondary 57M50}

\maketitle
\tableofcontents

\section{Introduction and result}

Let $k$ be a knot in $S^3$. In \cite{HS}, H.N. Howards and J. Schultens introduced a method to construct a manifold decomposition of double branched cover (abbreviate it as \textit{DBC}, and call the method \textit{the H-S method}) of $(S^3, k)$ (see section \ref{section-DBC}) and they proved that for $2$-bridge knots and $3$-bridge knots in thin position DBC inherits thin manifold decomposition (note that a knot in a thin position may not induce a thin manifold decomposition by the H-S method in general, see \cite{HS} and \cite{HRS}.) Indeed, if $k$ is a non-prime $3$-bridge knot, then $k=k_1\#k_2$ for $2$-bridge knots $k_1$ and $k_2$, and thin position of $k$ is the sum of $k_1$ and $k_2$ by stacking a thin position of one of the knots on top of a thin position of the other (see Corollary 2.5 of \cite{HS}.) So $w(k)=w(k_1)+w(k_2)-2$, i.e. $k$ holds the \textit{nearly additivity} of knot width (for $k=k_1\#k_2$, $w(k)=w(k_1)+w(k_2)-2$, see \cite{ST2} for more details on ``the nearly additivity of knot width''.)

So we get a question whether the property that a thin position of a knot induces a thin manifold decomposition of DBC by the H-N method implies the nearly additivity of knot width in general.
If a thin position of $k$ is the sum of an ordered stack of prime summands of $k$ where each summand is in a thin position (``\textit{a sum of an ordered stack of prime summands}'' means like the left of Figure \ref{figure-thin}, where the bottom summand is a Montesinos knot $M(0; (2,1), (3,1), (3, 1), (5, 1))$ in a thin position (this figure is borrowed from Figure 5.2.(c) of \cite{HK}) and the top summands are trefoils in thin position,)
then the sum of an ordered stack like that in a different order also determines a thin position of $k$. So $k$ must hold the nearly additivity of knot width by the uniqueness of prime factorization of $k$.
In \cite{RS}, Y. Rieck and E. Sedgwick proved that thin position of the sum of small knots is the sum of an ordered stack in that manner, i.e. it holds the nearly additivity of knot width. But M. Scharlemann and A. Thompson proposed a way to construct a example to contradict the nearly additivity of knot width (see \cite{ST2}.) Although R. Blair and M. Tomova proved that most of Scharlemann and Thompson's constructions do not produce counterexamples for the nearly additivity of knot width (see \cite{BT},) the question seems not obvious.

In this article, we will prove that the question is true.

\begin{theorem}\label{theorem-1}
	If a thin position of a knot $k$ induces a thin manifold decomposition of double branched cover of $(S^3,k)$ by the Howards and Schultens' method, then the thin position is the sum of prime summands by stacking  a thin position of one of prime summands of $k$ on top of a thin position of another prime summand, and so on. Therefore, $k$ holds the nearly additivity of knot width in this case.
\end{theorem}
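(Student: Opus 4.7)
The plan is to lift the problem to $\Sigma = \Sigma_2(k)$, the double branched cover, where by hypothesis the H-S method produces a thin manifold decomposition. If $k = k_1 \# \cdots \# k_n$ is the prime factorization, then the $n-1$ decomposing $2$-spheres $S_1,\ldots,S_{n-1}\subset S^3$ each meet $k$ in two points, and by Riemann--Hurwitz their preimages $\widetilde S_i \subset \Sigma$ are $2$-spheres, essential because each $k_i$ is nontrivial. Thus $\Sigma = \Sigma_2(k_1) \# \cdots \# \Sigma_2(k_n)$. The strategy is to locate each $\widetilde S_i$ as a $2$-sphere component of a thin surface of the H-S decomposition, and then to project back to $S^3$ to extract the desired stacked structure.

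The key technical step is: in a thin manifold decomposition, every essential $2$-sphere of $\Sigma$ is isotopic to a $2$-sphere component of a thin surface. The thin hypothesis gives that each thick surface is strongly irreducible in the compression body it bounds and each thin surface is incompressible in $\Sigma$. By Casson--Gordon, the submanifold between consecutive thin surfaces is irreducible, so no essential sphere can lie entirely in its interior. Combined with the incompressibility of the thin surfaces, a Haken-style innermost-disk argument should normalize each $\widetilde S_i$ against the thin surfaces: the only way the reduction can terminate is for $\widetilde S_i$ to be isotoped onto a $2$-sphere component of a thin surface, since the alternative of $\widetilde S_i$ ending up disjoint from the thin surfaces would place $\widetilde S_i$ inside an irreducible piece and contradict its being essential.

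Translating back to $(S^3,k)$, Riemann--Hurwitz again tells us that a $2$-sphere component of a thin surface in $\Sigma$ must be the preimage of a thin-level $2$-sphere $Q \subset S^3$ meeting $k$ in exactly two points, i.e.\ a connect-sum decomposing sphere for $k$. So the given thin position contains $n-1$ thin-level $2$-spheres realizing the prime decomposition. Cutting $S^3$ along these levels presents the thin position as an ordered vertical stack of tangles, and each piece capped off with a trivial tangle is one of the prime summands $k_i$ with an inherited position. Restricting the H-S decomposition to the corresponding region of $\Sigma$ yields a thin manifold decomposition of $\Sigma_2(k_i)$, so by induction on the number of summands (the base case being a prime knot) each summand's inherited position is itself thin. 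This is exactly the claimed ordered stack.

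Near additivity is then a short computation: at each stacking we identify the two points of the top thin level of one summand with the two points of the bottom thin level of the next, so the width of the stack is $\sum_i w(k_i) - 2(n-1)$; for $n=2$ this yields $w(k) = w(k_1) + w(k_2) - 2$. The main obstacle I anticipate is the normalization step for the reducing spheres: justifying that the innermost-disk reduction terminates at a parallel copy of a thin $2$-sphere rather than at a zero-intersection configuration, and handling the higher-genus thin surface components that $\widetilde S_i$ must cross in the process. A secondary point requiring care is the inductive step, namely that the restriction of a thin manifold decomposition to a connected-summand region is again a thin manifold decomposition of the summand's DBC, so that strong irreducibility and incompressibility survive the restriction.
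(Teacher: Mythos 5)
Your approach is genuinely different from the paper's, and it has real gaps that the paper's own argument is specifically designed to avoid.

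The paper works \emph{forward}: it takes the thin spheres $\tilde{L}_{j_1},\ldots,\tilde{L}_{j_m}$ that are already present in the Howards--Schultens decomposition (coming from the thin levels meeting $k$ in two points), cuts $M$ along them to get pieces $M_0,\ldots,M_m$ with induced GHS's, applies Scharlemann--Thompson's Rule~6 to get strong irreducibility of the thick surfaces, and then invokes Bachman's Lemma~4.7 to conclude each $M_i$ is irreducible or $S^3$. A separate Claim---using Waldhausen's theorem that $S^3$ is the double branched cover of only the unknot, plus the monotonicity forced on an unknotted summand in thin position---rules out the $S^3$ case. Finally Kim--Tollefson is used to conclude each $k_i$ is prime, and the standard width-minimality argument gives that each $k_i$ is in thin position. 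No isotopy normalization, no induction.

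You work \emph{backward}: start with decomposing spheres $\widetilde S_i$ coming from the prime factorization of $k$ and try to isotope them onto thin $2$-spheres. This has several concrete problems. First, the normalization step is not justified (as you acknowledge): innermost-disk arguments against incompressible higher-genus thin surfaces are fine, but intersections with thin $2$-spheres require a different argument (essential sphere systems, Laudenbach-style), and you do not actually establish that the process terminates at a thin sphere rather than at a sphere disjoint from all thin surfaces that happens to be parallel to a thin sphere only after some sliding. Second, and more seriously, the ``translate back'' step does not follow: an ambient isotopy of $\widetilde S_i$ in $\Sigma$ does not descend to an isotopy of the decomposing sphere in $(S^3,k)$ rel $k$ --- the deck transformation is not respected. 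What one actually needs (and what the paper proves) is that the thin $2$-point level itself separates $k$ into nontrivial summands, and this is exactly where Waldhausen's uniqueness of $S^3$ as a double branched cover is used; you skip this entirely. Third, your induction requires that restricting a thin H-S manifold decomposition to a summand's DBC region again yields a thin H-S manifold decomposition of that region; you flag this but it is a genuine gap, and it is not needed --- the paper instead argues directly that minimality of $w(k)$ forces each summand's induced position to be thin, which is elementary. If you want to salvage the backward route, the minimum repairs are: replace the normalization with the forward irreducibility argument via Bachman's Lemma~4.7, add the $S^3$ exclusion using Waldhausen, and drop the induction in favor of the direct width-minimality argument.
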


In section \ref{section-critical}, we will generalize Theorem 1.1 by using the concept \textit{critical surface} originated from D. Bachman (see \cite{Bachman1} and \cite{Bachman3} for the original definition and the recently modified definition of ``critical surface''.) So we will get the corollary.\\

\Corollaryno{5.3}\textit{
If a thin position of a knot $k$ induces a manifold decomposition of double branched cover $M$ of $(S^3,k)$ by the Howards and Schultens' method where each thick surface $H_+$ of the manifold decomposition of $M$ is strongly irreducible or critical in $M(H_+)$, then the thin position of $k$ is the sum of prime summands by stacking  a thin position of one of prime summands on top of a thin position of another prime summand, and so on. Therefore, $k$ holds the nearly additivity of knot width in this case.}

\section{Generalized Heegaard splittings}

In this section, we will introduce some definitions about generalized Heegaard splittings. We use the notations and definitions by D. Bachman in \cite{Bachman3} through this section for convenience.

\begin{definition} 
A \textit{compression body} (a \textit{punctured compression body} resp.) is a $3$-manifold which can be obtained by starting with some closed, orientable, connected surface, $H$, forming the product $H\times I$, attaching some number of $2$-handles to $H\times\{1\}$ and capping off all resulting $2$-sphere boundary components (some $2$-sphere boundaries resp.) that are not contained in $H\times\{0\}$ with $3$-balls. The boundary component $H\times\{0\}$ is referred to as $\partial_+$. The rest of the boundary is referred to as $\partial_-$. 
\end{definition}

\begin{definition} 
A \textit{Heegaard splitting} of a $3$-manifold $M$ is an expression of $M$ as a union $V\cup_H W$, where $V$ and $W$ are compression bodies that intersect in a transversally oriented surface $H=\partial_+V=\partial_+ W$. If $V\cup_H W$ is a Heegaard splitting of $M$ then we say $H$ is a \textit{Heegaard surface}.
\end{definition}

\begin{definition} 
Let $V\cup_H W$ be a Heegaard splitting of a $3$-manifold $M$. Then we say the pair $(V,W)$ is a \textit{weak reducing pair} for $H$ if $V$ and $W$ are disjoint compressing disks on opposite sides of $H$. A Heegaard surface is \textit{strongly irreducible} if it is compressible to both sides but has no weak reducing pairs.
\end{definition}

\begin{definition} \label{definition-GHS}
A \textit{generalized Heegaard splitting} (GHS)\footnote{Note that D. Bachman did not allow thin surfaces in a GHS to be $2$-spheres in \cite{Bachman3}. In particular, he introduced more generalized concept ``pseudo-GHS'' in \cite{Bachman3} to deal with thin spheres and trivial compression bodies.} H of a $3$-manifold $M$ is a pair of sets of pairwise disjoint, transversally oriented, connected surfaces, $\operatorname{Thick}(H)$ and $\operatorname{Thin}(H)$ (in this article, we will call the elements of each of both \textit{thick surfaces} and \textit{thin surfaces}, resp.), which satisfies the following conditions.
	\begin{enumerate}
		\item Each component $M'$ of $M-\operatorname{Thin}(H)$ meets a unique element $H_+$ of $\operatorname{Thick}(H)$ and $H_+$ is a Heegaard surface in $M'$. Henceforth we will denote the closure of the component of $M-\operatorname{Thin}(H)$ that contains an element $H_+\in\operatorname{Thick}(H)$ as $M(H_+)$.
		\item As each Heegaard surface $H_+\subset M(H_+)$ is transversally oriented, we can consistently talk about the points of $M(H_+)$ that are ``above'' $H_+$ or ``below'' $H_+$. Suppose $H_-\in \operatorname{Thin}(H)$. Let $M(H_+)$ and $M(H'_+)$ be the submanifolds on each side of $H_-$. Then $H_-$ is below $H_+$ if and only if it is above $H'_+$.
		\item There is a partial ordering on the elements of $\operatorname{Thin}(H)$ which satisfies the following: Suppose $H_+$ is an element of $\operatorname{Thick}(H)$, $H_-$ is a component of $\partial M(H_+)$ above $H_+$ and $H'_-$ is a component of $\partial M(H_+)$ below $H_+$. Then $H_->H'_-$.
	\end{enumerate}
\end{definition}

\begin{definition} 
Suppose $H$ is a GHS of a $3$-manifold $M$ with no $S^3$ components. Then $H$ is \textit{strongly irreducible} if each element $H_+\in \operatorname{Thick}(H)$ is strongly irreducible in $M(H_+)$.
\end{definition}

\section{The manifold decomposition of double branched cover of $(S^3,k)$ from a thin position of $k$\label{section-DBC}}

In this section, we will describe the method to construct a manifold decomposition of DBC from a thin position of a knot originated from H.N. Howards and J. Schultens in \cite{HS} (see section 3 of \cite{HS} for more details.) We borrow the notions and definitions directly from \cite{HS}.

\begin{definition} 
Let $h:\{S^3 - (\text{two points})\} \to [0, 1]$ be a height function on $S^3$ that restricts to a Morse function on $k$. Choose a regular value $t_i$ between each pair of adjacent critical values of $h|k$. The \textit{width} of $k$ with respect to $h$ is $\sum_i \#|k\cap h^{-1}(t_i)|$.
Define the \textit{width} of $k$ to be the minimum width of $k$ with respect to $h$ over all $h$. A \textit{thin position} of $k$ is the presentation of $k$ with respect to a height function that realizes the width of $k$.
\end{definition}

\begin{definition} 
A \textit{thin level} for $k$ is a $2$-sphere $S$ such that the following hold: 
	\begin{enumerate}
		\item $S = h^{-1}(t_0)$ for some regular value $t_0$;
		\item $t_0$ lies between adjacent critical values $x$ and $y$ of $h$, where $x$ is a minimum of $k$ lying above $t_0$ and $y$ is a
maximum of $k$ lying below $t_0$. 
	\end{enumerate}
A \textit{thick level} is a $2$-sphere $S$ such that the following hold: 
	\begin{enumerate}
		\item $S = h^{-1}(t_0)$ for some regular value $t_0$;
		\item $t_0$ lies between adjacent critical values $x$ and $y$ of $h$, where $x$ is a maximum of $k$ lying above $t_0$ and $y$ is a
minimum of $k$ lying below $t_0$.
	\end{enumerate}
\end{definition}

\begin{definition}
A \textit{manifold decomposition} is a generalized version of GHS in Definition \ref{definition-GHS}\footnote{Many authors use the term ``generalized Heegaard splitting'' to denote ``manifold decomposition'' in \cite{ST} by M. Scharlemann and A. Thompson, but the author distinguished the terms ``manifold decomposition'' and ``generalized Heegaard splitting'' to use two different definitions at the same time.}, where we permit thin surfaces to be 2-spheres (this ``manifold decomposition'' is originated from \cite{ST} by M. Scharlemann and A. Thompson.)
So adjacent thick and thin surfaces in a manifold decomposition cobound a (possibly, punctured-) compression body.
\end{definition}

\begin{definition} 
Let the \textit{complexity} of a connected surface $S$ be $c(S)= 1-\chi(S) = 2 \operatorname{genus}(S)-1$ for $S$ of positive genus. Define $c(S^2) = 0$. For $S$ not necessarily connected define $c(S) = \sum\{c(S')| S'\text{ a connected component of }S\}$.
\end{definition}

\begin{definition}
Let the \textit{width} of the manifold decomposition $H$ of $M$ be the set of integers $\{c(S_i)| 1\leq i \leq k, \text{ each }S_i\text{ is the thick surface of }H \}$.
\end{definition}

Order these integers in monotonically non-increasing order. Compare the ordered multi-sets lexicographically.

\begin{definition} 
Define the \textit{width} $w(M)$ of $M$ to be the minimal width over all manifold decompositions using the above ordering of the sets of integers.
\end{definition}

\begin{definition}
A given manifold decomposition of $M$ is \textit{thin} if the width of the manifold decomposition is the width of $M$.
\end{definition}

Let $k$ be a knot and denote DBC of $(S^3,k)$ by $M$.
If $k$ is in a thin position, then $M$ inherits a manifold decomposition as follows: Denote the thick levels of $k$ by $S_1,\cdots,S_n$ and the thin levels by $L_1,\cdots,L_{n-1}$. Each $S_i$ and each $L_i$ is a sphere that meets the knot some (even) number of times. More specifically, each $S_i$ meets $k$ at least $4$ times and each $L_i$ meets $k$ at least $2$ times. Denote the surface in $M$ corresponding to $S_i$ by $\tilde{S}_i$ and the surface in $M$ corresponding to $L_i$ by $\tilde{L}_i$ . Each $\tilde{S}_i$ is a closed orientable surface of genus at least $1$ and each $\tilde{L}_i$ is a closed orientable surface. More specifically, if $S_i$ meets $k$ exactly $2l$ times, then $\tilde{S}_i$ is a closed orientable surface of genus $l-1$. And if $L_i$ meets $k$ exactly $2l$ times, then $\tilde{L}_i$ is a closed orientable surface of genus $l-1$. See Figure \ref{figure-thin}.

\begin{figure}
\includegraphics[bb=48 170 503 572,width=8.5cm]{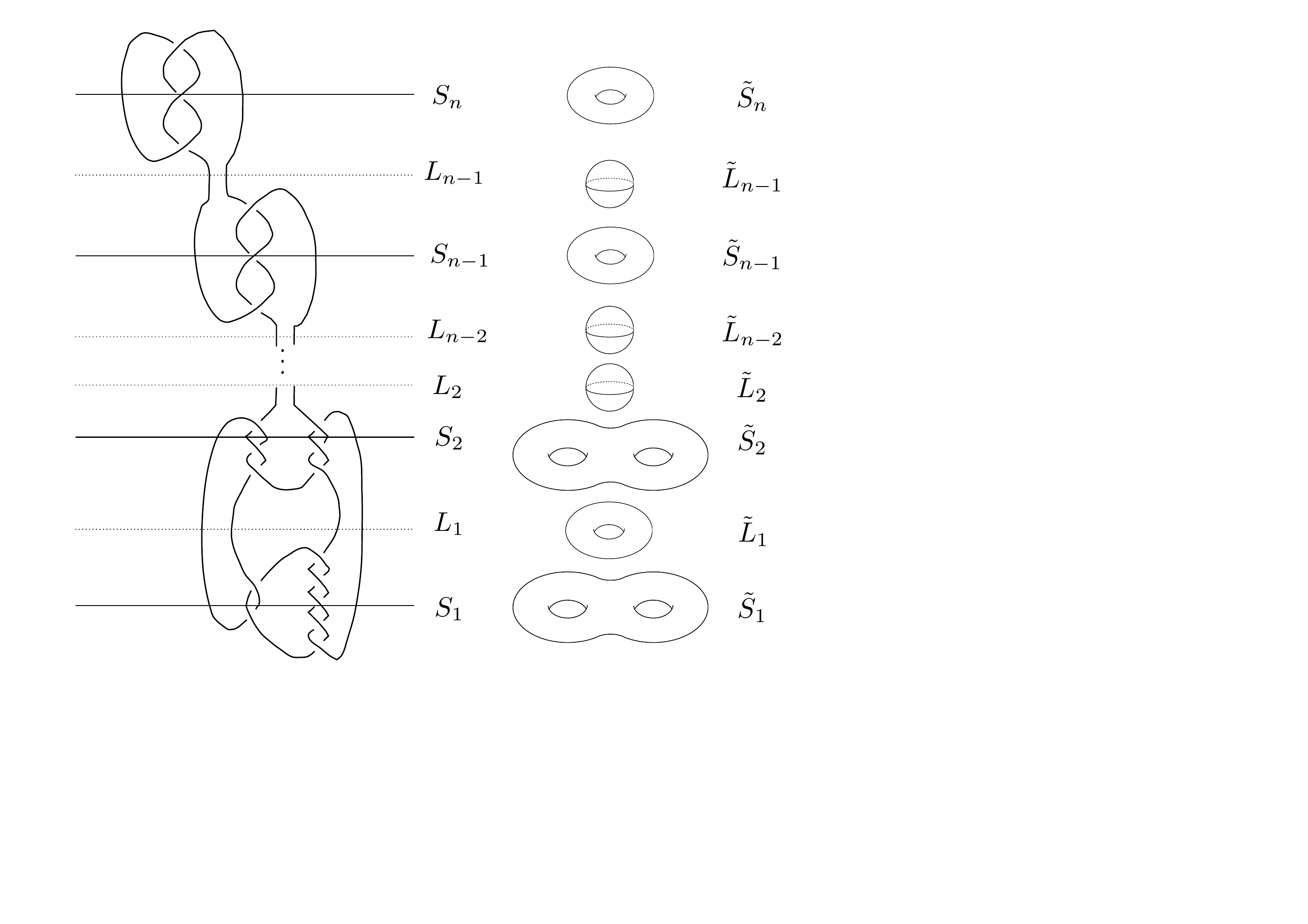}
\caption{Thin and thick levels of $k$ and corresponding surfaces in $M$\label{figure-thin}}
\end{figure}

The $3$-ball bounded by $S_1$ ($S_n$ resp.) in $S^3$ corresponds to a handlebody $H_1$ ($H_n$ resp.) in $M$, where $H_1$ ($H_n$ resp.) is bounded by $\tilde{S}_1$ ($\tilde{S}_n$ resp.) in $M$.
Moreover, the submanifold between $S_i$ and $L_{i-1}$ in $S^3$ (between $L_i$ and $S_{i}$ resp.) corresponds to a (possibly punctured) compression body $C_i^1$ ($C_i^2$ resp.) in $M$, where $\partial_+ C_i^1 = \tilde{S}_i$ ($\partial_+C_i^2=\tilde{S}_{i}$ resp.) and $\partial_- C_i^1 = \tilde{L}_{i-1}$ ($\partial_-C_i^2=\tilde{L}_{i}$ resp.) See Figure \ref{figure-md}. Moreover, $C_i^1$ ($C_i^2$ resp.) is not a trivial compression body, i.e. homeomorphic to $F\times I$ for a closed surface $F$.

\begin{figure}
\includegraphics[bb=176 83 448 550,width=4cm]{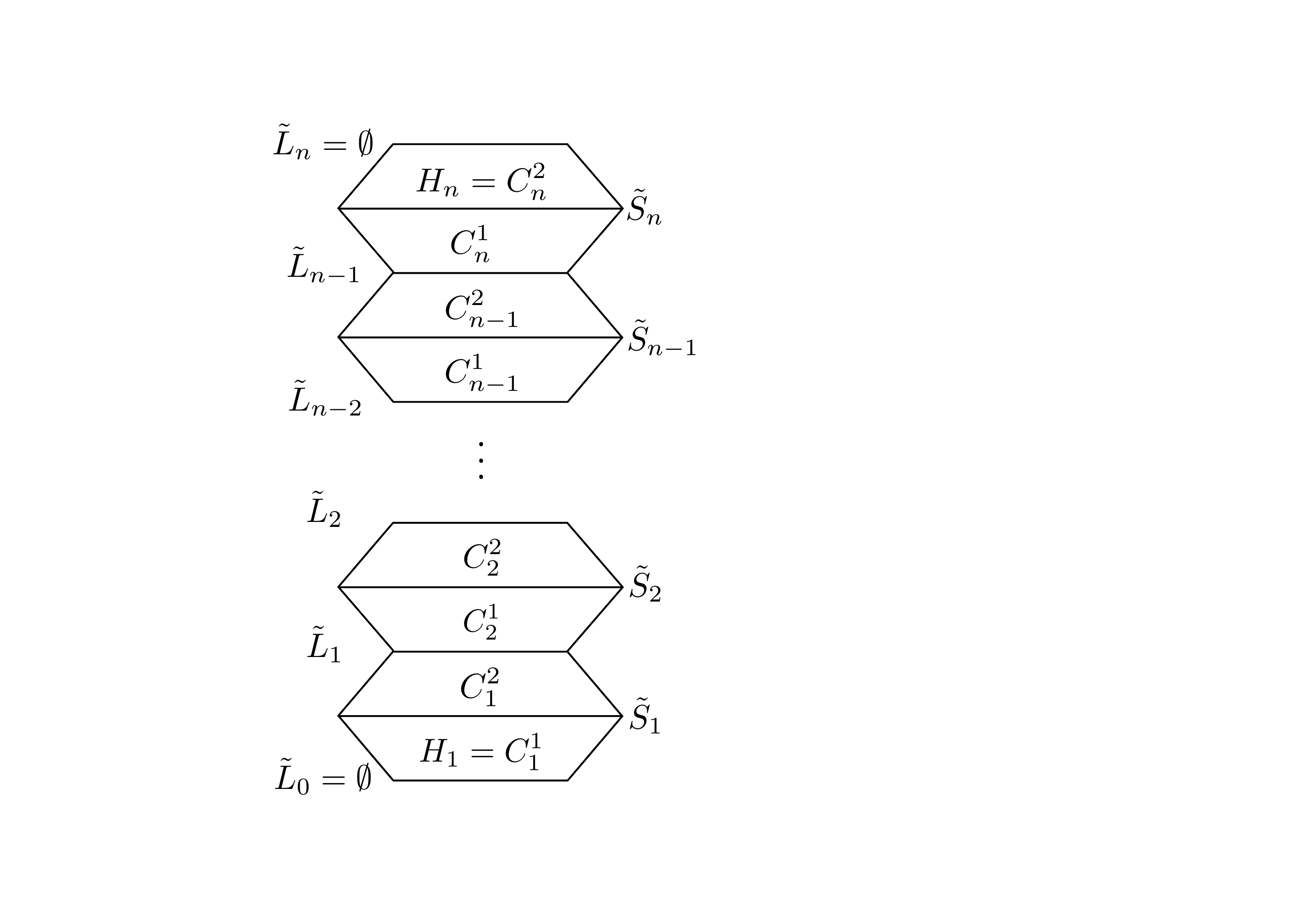}
\caption{The manifold decomposition of $M$\label{figure-md}}
\end{figure}

Therefore, the manifold decomposition of $M$ by Howards and Schultens can be written as an ordered set of thin and thick surfaces,
$$\{\tilde{L}_0=\emptyset, \tilde{S}_1, \tilde{L}_1,\tilde{S}_2,\cdots, \tilde{L}_{n-1}, \tilde{S}_n,\tilde{L}_n=\emptyset\},$$
where we denote the empty negative boundaries of $H_1$ and $H_n$ as $\tilde{L}_0$ and $\tilde{L}_n$ for convenience.

\section{Proof of Theorem \ref{theorem-1}\label{section-main-theorem}}

Let us consider the assumption that the given thin position of $k$ induces a thin manifold decomposition of DBC $M$ of $(S^3,k)$. In Rule 6 of \cite{ST}, Scharlemann and Thompson proved that each thick surface $S$ in a thin manifold decomposition is \textit{weakly incompressible}, i.e. any two compressing disks for $S$ on opposite sides of $S$ intersect along their boundary, so every thick surface in a thin manifold decomposition is strongly irreducible in $M(S)$ (we can also check it from Proposition 4.2.3 of \cite{SSS}.)

Let $H=\{\tilde{L}_0=\emptyset, \tilde{S}_1, \tilde{L}_1,\tilde{S}_2,\cdots, \tilde{L}_{n-1}, \tilde{S}_n,\tilde{L}_n=\emptyset\}$ be the manifold decomposition of DBC of $(S^3, k)$ by the H-S method as in the end of section \ref{section-DBC}, where each $\tilde{S}_i$ for $i=1,\cdots,n$ ($\tilde{L}_i$'s for $i=1,\cdots, n-1$ resp.) comes from a thick level (thin level resp.) of the given thin position of $k$.

If there exist $m$-thin levels which intersect $k$ in two points, then we get $m$-thin spheres in $H$. Since the construction of the manifold decomposition allows only one surface for each thick or thin level, these $m$-thin spheres must be $\tilde{L}_{j_1}$, $\cdots$, $\tilde{L}_{j_m}$ for some $1\leq j_1< \cdots< j_m\leq {n-1}$. Let $\tilde{L}_{j_0}$ be $\tilde{L}_0$  and $\tilde{L}_{j_{m+1}}$ be $\tilde{L}_{n}$ for convenience (so they are empty sets.)

Now we cut DBC along the $m$-thin spheres, and cap off all $S^2$ boundaries with $3$-balls. Then we get $(m+1)$-manifolds, $M_0$, $M_1$, $\cdots$, $M_m$, where each $M_i$ comes from the submanifold of DBC bounded by $\tilde{L}_{j_{i}}$ and $\tilde{L}_{j_{i+1}}$ for $i=0,\cdots, m$. In addition, we can induce the canonical manifold decomposition $H_i$ of $M_i$ from $H$, where $H_i=\{\, \emptyset, \tilde{S}_{j_i+1}, \tilde{L}_{j_i+1},\cdots,\tilde{L}_{j_{i+1}-1},\tilde{S}_{j_{i+1}},\emptyset\}$. Now every thin surface of $H_i$ is not homeomorphic to $S^2$ for $i=0,\cdots,m$, so we can say that the manifold decomposition $H_i$ of $M_i$ is a GHS in Definition of \ref{definition-GHS}. Moreover, it is obvious that the strongly irreducibility of each $\tilde{S}_j$ in $M(\tilde{S}_j)$ for $j=1,\cdots,n$ does not change after it becomes a Heegaard surface in $M_i(\tilde{S}_j)$ for some $i$. So either $H_i$ is a strongly irreducible GHS or $M_i$ is homeomorphic to $S^3$ for $i=0,\cdots,m$.

By Lemma 4.7 of \cite{Bachman3}, $M_i$ is irreducible or homeomorphic to $S^3$ (obviously, it is also irreducible) for $i=0,\cdots,m$ , i.e. no one is $S^2\times S^1$. Since the thin spheres $\tilde{L}_{j_1},\cdots,\tilde{L}_{j_m}$ are separating in $M$, no one of them is an essential sphere in a $S^2\times S^1$ piece in the prime decomposition of $M$. So we can assume that there is no $S^2\times S^1$ piece in the prime decomposition of $M$. But it is not obvious whether the sum $M_0\#_{\tilde{L}_{j_1}}M_1\#_{\tilde{L}_{j_2}}\#\cdots\#_{\tilde{L}_{j_m}}M_m$ does not have any trivial summand. So we need the following claim.\\

\Claim{No $M_i$ is homeomorphic to $S^3$ for $i=0,\cdots,m$.} 

\begin{proofc}
Suppose that $M_l$ is homeomorphic to $S^3$ for some $l$.

Let the thin levels in the thin position of $k$ corresponding to $\tilde{L}_{j_l}$ and $\tilde{L}_{j_{l+1}}$ be $L'$ and $L''$ (one of both may be empty level if $l=0$ or $m$), and the thick level in the thin position of $k$ corresponding to $\tilde{S}_{j_l+1}$ be $S$. The thin levels $L'$ and $L''$ intersect $k$ in two points, i.e. each of both $L'$ and $L''$ realizes a connected sum of $k$, i.e. $k=k_1 \#_{L'} k' \#_{L''} k_2$. 
Moreover, the thick level $S$ must intersect $k$ in $4$ or more points. 
Since $M_l\cong S^3$ is DBC of $(k',S^3)$ and $S^3$ has the unique representation as DBC of $S^3$ branched along a knot or a link (see \cite{Wa2}, and this is also true for the other lens spaces, see \cite{HR} or Problem 3.26 of \cite{K},) $k'$ is an unknot. 
Since $k$ is in a thin position, the unknot summand $k'$ in $k=k_1 \#_{L'} k' \#_{L''} k_2$ must  intersect all levels between $L'$ and $L''$ in two points, this contradicts the existence of the thick level $S$. So we get a contradiction. In the cases of $l=0$ and $m$, we get a contradiction by similar arguments for each case. This completes the proof of Claim. 
\end{proofc}

Now we can say that the thin spheres $\tilde{L}_{j_1}$, $\cdots$, $\tilde{L}_{j_m}$ determine the prime decomposition of $M$. Let us consider the thin levels $L_{j_1}$, $\cdots$, $L_{j_m}$ corresponding to $\tilde{L}_{j_1}$, $\cdots$,$\tilde{L}_{j_m}$. Then they cut $k$ into $m+1$ summands, i.e $k=k_0\#\cdots\#k_m$. In particular, each $M_i$ is DBC of $(S^3,k_i)$ for $i=0,\cdots,m$ and it is also a prime manifold as already proved. Moreover, each $k_i$ must be prime by Corollary 4 of \cite{KT}. Since $k$ is in a thin position, each summand $k_i$ must be in a thin position. This complete the proof of Theorem \ref{theorem-1}. 

\section{A generalization of Theorem \ref{theorem-1}\label{section-critical}}

In this section, we will generalize Theorem \ref{theorem-1} using the concept \textit{critical surface}. D. Bachman introduced a concept ``critical surface'' in \cite{Bachman1} and prove several properties about critical surface and minimal common stabilization. Also he proved Gordon's conjecture (see Problem 3.91 of \cite{K}) using critical surface theory in \cite{Bachman3}. In particular, he used the term \textit{topological minimal surface} to denote the class of surfaces such that they are incompressible, strongly irreducible or critical in \cite{Bachman4}.

\begin{definition}[D. Bachman, Definition 3.3 of \cite{Bachman3}]\label{definition-critical} Let $H$ be a Heegaard surface in some $3$-manifold which is compressible to both sides. The surface $H$ is \textit{critical}
if the set of all compressing disks for $H$ can be partitioned into subsets $C_0$ and $C_1$ such that the following hold.
	\begin{enumerate}
		\item For each $i=0,1$ there is at least one weak reducing pair $(V_i,W_i)$, where $V_i$, $W_i\in C_i$.
		\item If $V\in C_0$ and $W\in C_1$ then $(V,W)$ is not a weak reducing pair.
	\end{enumerate}
\end{definition}

\begin{definition}
Suppose $H$ is a GHS of a $3$-manifold $M$ with no $S^3$ components. Then $H$ is \textit{psudo-critical} if each thick surface $H_+\in \operatorname{Thick}(H)$ is strongly irreducible or critical in $M(H_+)$.\footnote{This definition is weaker than the definition ``\textit{critical GHS}'' in \cite{Bachman3}.}
\end{definition}

Now we introduce a generalization of Theorem \ref{theorem-1}.

\begin{corollary}\label{corollary-1}
If a thin position of a knot $k$ induces a manifold decomposition of double branched cover $M$ of $(S^3,k)$ by the Howards and Schultens' method where each thick surface $H_+$ of the manifold decomposition is strongly irreducible or critical in $M(H_+)$, then the thin position of $k$ is the sum of prime summands by stacking  a thin position of one of prime summands on top of a thin position of another prime summand, and so on. Therefore, $k$ holds the nearly additivity of knot width in this case.
\end{corollary}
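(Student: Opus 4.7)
The plan is to run the proof of Theorem \ref{theorem-1} essentially unchanged, making a single substitution at one key step: where that proof used strong irreducibility of every thick surface to conclude that the induced GHS on each piece is strongly irreducible, we instead use the hypothesis of Corollary \ref{corollary-1} to conclude the induced GHS is pseudo-critical. Everything else transports verbatim because the combinatorial structure of the Howards--Schultens decomposition and the Claim ruling out trivial summands depend only on the geometry of the thin position of $k$, not on strong irreducibility.

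Concretely, I would begin with the same setup as in section \ref{section-main-theorem}: form the induced manifold decomposition $H=\{\tilde L_0,\tilde S_1,\tilde L_1,\ldots,\tilde S_n,\tilde L_n\}$ of $M$, locate the thin spheres $\tilde L_{j_1},\ldots,\tilde L_{j_m}$ (those $\tilde L_{j_i}$'s arising from thin levels meeting $k$ in two points), cut $M$ along them, and cap off with $3$-balls to produce manifolds $M_0,\ldots,M_m$ with induced manifold decompositions $H_i=\{\emptyset,\tilde S_{j_i+1},\ldots,\tilde S_{j_{i+1}},\emptyset\}$. Each $H_i$ has no thin $2$-sphere, so each $H_i$ is in fact a GHS in the sense of Definition \ref{definition-GHS}. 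By hypothesis, each $\tilde S_j$ is strongly irreducible or critical in $M(\tilde S_j)$, and capping off $2$-spheres in the complement does not change the set of compressing disks for $\tilde S_j$; hence either $M_i\cong S^3$ or $H_i$ is pseudo-critical in $M_i$.

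The main obstacle is the critical-surface analogue of Lemma 4.7 of \cite{Bachman3}: I need to know that a closed orientable $3$-manifold admitting a pseudo-critical GHS (with no thin $S^2$) is irreducible or $S^3$. The strongly irreducible version is exactly what Theorem \ref{theorem-1} used; the generalization to the case where thick surfaces are allowed to be critical is the central technical content of Bachman's proof of Gordon's conjecture in \cite{Bachman3} (and is restated in the language of topologically minimal surfaces in \cite{Bachman4}): a topologically minimal surface in a reducible $3$-manifold can be isotoped so that an essential sphere intersects it in curves that are essential on the surface, which for a pseudo-critical GHS with no thin spheres produces an essential curve on some thick surface that bounds disks in both compression bodies and is disjoint from a weak-reducing disk pair of the opposite color, contradicting pseudo-criticality. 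I would cite this result (or, if the precise statement is not in the literature, adapt Bachman's argument in a short lemma) to conclude that each $M_i$ is irreducible (in particular, not $S^2\times S^1$).

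With irreducibility in hand, the Claim in the proof of Theorem \ref{theorem-1} transports without change, since its argument uses only that $M_l\cong S^3$ implies $k'$ is unknotted (from the uniqueness of DBC representations of $S^3$), together with the existence of a thick level $S$ meeting $k$ in $\geq 4$ points between the two thin levels; neither fact involves the thickness of the manifold decomposition. Once no $M_i$ is $S^3$, the thin spheres $\tilde L_{j_1},\ldots,\tilde L_{j_m}$ realize the prime decomposition of $M$; the corresponding thin levels $L_{j_1},\ldots,L_{j_m}$ cut $k$ into summands $k=k_0\#\cdots\#k_m$, each $M_i$ is the DBC of $(S^3,k_i)$, each $k_i$ is prime by Corollary 4 of \cite{KT}, and the minimality of the thin position of $k$ forces each $k_i$ to be in a thin position, which gives Corollary \ref{corollary-1}.
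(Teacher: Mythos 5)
Your proposal is correct and follows essentially the same route as the paper: replace strong irreducibility with the pseudo-critical condition, cut along the thin spheres, invoke the extension of Bachman's Lemma 4.7 (the paper also cites Lemma 4.6) to pseudo-critical GHS to get irreducibility, and then rerun the Claim and the final prime-decomposition argument verbatim. The paper justifies the key lemma extension simply by observing that Bachman's proofs depend only on each thick surface being strongly irreducible or critical (not on how many are critical), which is the same observation underlying the adaptation you sketch.
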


\begin{proof}
Assume $H=\{\tilde{L}_0=\emptyset, \tilde{S}_1, \tilde{L}_1,\tilde{S}_2,\cdots, \tilde{L}_{n-1}, \tilde{S}_n,\tilde{L}_n=\emptyset\}$ be the manifold decomposition of DBC of $(S^3, k)$ by the H-S method, and $m$-thin spheres $\tilde{L}_{j_1}$, $\cdots$, $\tilde{L}_{j_m}$ cut $M$ into $M_0,\cdots,M_m$ with the canonical GHS $H_0,\cdots,H_m$ as in section \ref{section-main-theorem}. In particular, every thick surface $H_+\in \operatorname{Thick}{H}$ is strongly irreducible or critical in $M(H_+)$ by the hypothesis.

It is obvious that the strongly irreducibility or criticality of each $\tilde{S}_j$ in $M(\tilde{S}_j)$ for $j=1,\cdots,n$ does not change after it becomes a Heegaard surface in $M_i(\tilde{S}_j)$ for some $i$. So either $H_i$ is a psudo-critical GHS or $M_i$ is homeomorphic to $S^3$ for $i=0,\cdots,m$.

Since the proofs of Lemma 4.6 and Lemma 4.7 of \cite{Bachman3} do not depend on the number of critical thick levels, and only depend on the property that each thick surface of the GHS is strongly irreducible or critical, we can extend both lemmas to psudo-critical GHS. So we get each $M_i$ is irreducible or homeomorphic to $S^3$. 

The remaining arguments of the proof of Corollary \ref{corollary-1} are the same as those of Theorem \ref{theorem-1}. This completes the proof of Corollary \ref{corollary-1}
\end{proof}

\bibliographystyle{plain}

\begin{thebibliography}{10}

\bibitem{Bachman1} D. Bachman, \emph{Critical heegaard surfaces}, Trans. Amer. Math. Soc. \textbf{354} (2002), no. 10, 4015--4042 (electronic).



\bibitem{Bachman3} D. Bachman, \emph{Connected sums of unstabilized Heegaard splittings are unstabilized}, Geom. Topol. \textbf{12} (2008), no. 4,  2327--2378.

\bibitem{Bachman4} D. Bachman, \emph{Barriers to topologically minimal surfaces}, arXiv:0903.1692v1

\bibitem{BT} R. Blair and M. Tomova, \emph{Companions of the unknot and width additivity}, arXiv:0908.4103v1


\bibitem{HK} D.J. Heath and T. Kobayashi, \emph{Essential tangle decomposition from thin position of a link}, Pacific J. Math. \textbf{179} (1997), no.1, 101--117.

\bibitem{HRS} H. Howards, Y. Rieck, and  J. Schultens,
\emph{Thin position for knots and 3-manifolds: a unified approach;
Workshop on Heegaard Splittings},  Geom. Topol. Monogr. \textbf{12} (2007), 89--120. 

\bibitem{HR} C. Hodgson and and J.H. Rubinstein, \emph{Involutions and isotopies of lens spaces}, Knot theory and manifolds (Vancouver, B.C., 1983), 60--96, Lecture Notes in Math., 1144, Springer, Berlin, 1985. 

\bibitem{HS} H. Howards and J. Schultens, \emph{Thin position for knots and $3$-manifolds}, Topology Appl., \textbf{155} (2008), no. 13, 1371--1381. 

\bibitem{K} R. Kirby, \emph{Problems in low-dimensional topology}, from: “Geometric topology (Athens, GA, 1993)”, AMS/IP Stud. Adv. Math. \textbf{2}, Amer. Math. Soc. (1997) 35–473.

\bibitem{KT} P. Kim and J. Tollefson \emph{Splitting the PL involutions of nonprime 3-manifolds}, Michigan Math. J. \textbf{27} (1980) no. 3,  259--274.

\bibitem{RS} Y. Rieck and E. Sedgwick, \emph{Thin position for a connected sum of small knots}, Algebr. Geom. Topol. \textbf{2} (2002), 297--309 (electronic).

\bibitem{SSS} T. Saito, M. Scharlemann, and J. Schultens, \emph{Lecture notes on generalized Heegaard splittings}, arXiv:math/0504167.

\bibitem{ST} M. Scharlemann and A. Thompson, \emph{Thin position for $3$-manifolds}, A.M.S. Contemp. Math., 164:231-238, 1994.

\bibitem{ST2} M. Scharlemann and A. Thompson, \emph{On the additivity of knot width}, Proceedings of the Casson Fest, 135--144 (electronic), Geom. Topol. Monogr., \textbf{7}, Geom. Topol. Publ., Coventry, 2004.


\bibitem{Wa2} F. Waldhausen, \emph{\"{U}ber involutionen der 3-sph\"{a}re}, Topology \textbf{8} (1969) 81--91.

\end{thebibliography}

\end{document}